\documentclass{amsart}
\usepackage{amsfonts,amssymb,amsmath,amsthm}
\usepackage{url}
\usepackage{enumerate}

\urlstyle{sf}
\newtheorem{thrm}{Theorem}[section]
\newtheorem{lem}[thrm]{Lemma}

\newtheorem{cor}[thrm]{Corollary}
\theoremstyle{definition}

\newtheorem{remark}[thrm]{Remark}
\newtheorem{example}[thrm]{Example}
\numberwithin{equation}{thrm}

\author{Tiberiu Dumitrescu }
\address{Facultatea de Matematica si Informatica,
University of Bucharest,
14 Academiei Str., Bucharest, RO 010014,
Romania
}
\email{tiberiu@fmi.unibuc.ro, tiberiu\_dumitrescu2003@yahoo.com}
\author{Cristodor Ionescu}
\address{Institute of Mathematics {Simion Stoilow} of the Romanian Academy,   P.O. Box 1-764, Bucharest, RO 014700, 
Romania
}
\email{Cristodor.Ionescu@imar.ro}
%\thanks{This work was done while the author was a visiting scholar at
%a northern university}

\keywords{Smooth algebra, regular ring, ring of algebraic integers}
\subjclass{Primary 13H05, Secondary 13F05, 11R04}
%% NB There should be only one primary classification, and zero or
%more secondary classifications.
\begin{document}

\title[Running Head]{Some examples of two-dimensional regular rings}

\begin{abstract}
Let $B$ be a ring and $A=B[X,Y]/(aX^2+bXY+cY^2-1)$ where $a,b,c\in B$. 
We study the smoothness of $A$ over $B$,  and
the regularity of $B$ when $B$ is a ring of algebraic integers. 
\end{abstract}
\maketitle

\section{Introduction} \label{sect1}
In  \cite{R}, Roberts investigated the smoothness (over $\mathbb{Z}$) and the regularity of the ring
$\mathbb{Z}[X,Y]/(aX^2+bXY+cY^2-1), a,b,c\in \mathbb{Z}.$ 
He showed that smoothness depends on $a,b,c$ mod $2$ (cf. \cite[Theorem 1]{R}), while regularity 
depends on $a,b,c$ 
mod $4$ (cf. \cite[Theorem 2]{R}).

In this note, we use ideas from  \cite{R} to study the regularity of the
 ring $A:=B[X,Y]/(aX^2+bXY+cY^2-1)$, $a,b,c\in B$, where $B$ is a ring of algebraic integers. 
As expected, this regularity depends on $a,b,c$ mod $(\sqrt{2B})^2$ 
(see Corollary \ref{8}). Our main result (Theorem \ref{3})  gives a description of the 
singular locus of $B$.
On the way, we show that the smoothness of $A$ over $B$ can be easily described: if $B$ is an 
arbitrary ring, then $A$ is smooth over $B$ iff $a,c\in  \sqrt{(2, b)B}$ (Theorem \ref{1}).
Finally, Example \ref{14} suggests that our arguments  can be also used in certain  higher 
degree cases. 
Throughout this paper, all rings are commutative and unitary. For any undefined terminology our 
standard reference is \cite{M}.

\section{Smoothness} \label{ns}

Let $B$ be an arbitrary ring. The smoothness of $B[X,Y]/(aX^2+bXY+cY^2-1)$ over $B$ can be described easily.

\begin{thrm}\label{1}
Let $B$ be a ring and $a,b,c\in B$. Then $A=B[X,Y]/(aX^2+bXY+cY^2-1)$ is smooth over $B$ iff $a,c\in  \sqrt{(2, b)B}$.
\end{thrm}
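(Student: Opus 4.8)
The plan is to prove both implications simultaneously by invoking the standard characterization of smoothness: a finitely presented $B$-algebra is smooth over $B$ if and only if it is flat over $B$ and all of its geometric fibres are regular, equivalently the fibres $A\otimes_B k(\mathfrak p)$ are smooth over the residue fields $k(\mathfrak p)$ (see \cite{M}). Since $A=B[X,Y]/(f)$ with $f=aX^2+bXY+cY^2-1$ is manifestly of finite presentation, the argument splits into (i) checking flatness, which I expect to be automatic, and (ii) a fibrewise analysis of regularity, which I expect to reproduce exactly the condition $a,c\in\sqrt{(2,b)B}$.

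For flatness I would show that $A$ is a relative complete intersection over $B$. The fibre over $\mathfrak p\in\operatorname{Spec}B$ is $k(\mathfrak p)[X,Y]/(\bar f)$, where $\bar f$ is the image of $f$. Because $f$ has constant term $-1$, the polynomial $\bar f$ is never zero: it is a unit exactly when $\bar a=\bar b=\bar c=0$ (the empty fibre), and otherwise it is a nonzero nonunit in the two-dimensional domain $k(\mathfrak p)[X,Y]$, so the fibre has dimension $2-1=1$. Thus every nonempty fibre has the expected dimension (two variables, one relation), so $A$ is a relative global complete intersection and hence flat over $B$.

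For the fibres I would apply the Jacobian criterion over $k(\mathfrak p)$ to $\bar f$, whose partials are $\partial\bar f/\partial X=2\bar aX+\bar bY$ and $\partial\bar f/\partial Y=\bar bX+2\bar cY$. The key identity is $X\,\partial\bar f/\partial X+Y\,\partial\bar f/\partial Y=2(\bar aX^2+\bar bXY+\bar cY^2)=2(\bar f+1)$, which equals $2$ along the curve $\bar f=0$. Hence if $2$ is invertible in $k(\mathfrak p)$ there are no singular points and the fibre is smooth; this matches the fact that then $(2,b)B=B$, forcing $a,c\in\sqrt{(2,b)B}=B$ trivially. When $\operatorname{char}k(\mathfrak p)=2$ the partials reduce to $\bar bY$ and $\bar bX$: if $\bar b\neq0$ their common zero locus is $X=Y=0$, a point not lying on $\bar f=0$, so the fibre is again smooth; whereas if $\bar b=0$ both partials vanish identically, so the nonempty fibre, which occurs exactly when $(\bar a,\bar c)\neq0$, is singular everywhere. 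Reading this off primes, the fibre over $\mathfrak p$ fails to be regular precisely when $2,b\in\mathfrak p$ and ($a\notin\mathfrak p$ or $c\notin\mathfrak p$); hence all fibres are regular iff $a,c\in\mathfrak p$ for every prime $\mathfrak p\supseteq(2,b)B$, that is, iff $a,c\in\sqrt{(2,b)B}$.

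The hard part will be the characteristic-two analysis, and in particular ensuring that the fibre condition is genuine geometric regularity rather than mere regularity. When $\bar b=0$ and $(\bar a,\bar c)\neq0$ one must check that over $\overline{k(\mathfrak p)}$ the curve $\bar aX^2+\bar cY^2=1$ becomes a non-reduced double line $(\alpha X+\gamma Y+1)^2=0$ with $\alpha^2=\bar a,\ \gamma^2=\bar c$, hence is singular even after base change; conversely, in the smooth cases the computation that $(\partial\bar f/\partial X,\partial\bar f/\partial Y,\bar f)$ generates the unit ideal persists over the algebraic closure, so geometric regularity holds. The remaining bookkeeping, namely identifying $\sqrt{(2,b)B}$ with $\bigcap_{\mathfrak p\supseteq(2,b)B}\mathfrak p$ and matching it against the locus of bad fibres, is then routine.
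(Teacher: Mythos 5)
Your proposal is correct, but it follows a genuinely different route from the paper's. The paper never mentions flatness or fibres: it invokes a Jacobian criterion valid over an \emph{arbitrary} base (Majadas--Rodicio, Proposition 5.1.9), namely that $A=C/(g)$ with $C=B[X,Y]$ is smooth over $B$ iff $J=(g,\partial g/\partial X,\partial g/\partial Y)C$ is the unit ideal; Euler's formula gives $2\in J$, one reduces modulo $2B$, computes $J=(b,\,aX^2+cY^2-1)C$ in characteristic $2$, and finishes with the elementary fact that a polynomial with unit constant term is invertible iff its remaining coefficients are nilpotent, i.e.\ $a,c$ nilpotent modulo $(2,b)B$. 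You instead split smoothness as flatness plus geometrically regular fibres, prove flatness once and for all via the relative-global-complete-intersection criterion (every nonempty fibre has dimension $2-1$), and run the Euler/Jacobian analysis fibre by fibre, identifying the bad fibres (characteristic $2$, $\bar b=0$, $(\bar a,\bar c)\neq 0$) as non-reduced double lines $(\alpha X+\gamma Y+1)^2=0$ over the algebraic closure. Both arguments pivot on the same Euler identity and the same degeneration of the partials to $(\bar bY,\bar bX)$ in characteristic $2$; the difference is where the heavy commutative algebra sits. Your route needs two substantial black boxes --- flatness of relative global complete intersections, and ``smooth $=$ flat $+$ geometrically regular fibres'' for finitely presented algebras --- and since the theorem allows an arbitrary, possibly non-Noetherian, ring $B$, you should cite these in that generality (EGA IV 11.3.8 and 17.5.1, or the Stacks project) rather than Matsumura, whose framework is Noetherian. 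The paper needs only one black box and stays inside a single ideal computation in $B[X,Y]$, which is shorter and is the computation reused in the regularity section; what your version buys is geometric transparency: it makes flatness explicit instead of hidden in the criterion, and it exhibits exactly which fibres are singular and why (geometric non-reducedness), which is precisely the distinction between smoothness and mere regularity of fibres. One aside in your write-up is loose but not load-bearing: $2$ being invertible in a particular residue field $k(\mathfrak p)$ means $2\notin\mathfrak p$, not that $(2,b)B=B$; your actual concluding deduction, that all fibres are regular iff $a,c\in\mathfrak p$ for every $\mathfrak p\supseteq(2,b)B$, is the correct one.
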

\begin{proof}

Let $J'$ be the jacobian ideal of $A$ and set $f=aX^2+bXY+cY^2$.
By Euler's formula for homogeneous functions, we have 
$2f=X(\partial f/\partial X)+Y(\partial f/\partial Y)$,
hence $2\in J'$, because the image of $f$ in $A$ is $1$.
Moding out by $2B$, we may assume that $B$ has characteristic $2$.
We have to show that $A$ is smooth over $B$ iff      $a,c$ are nilpotent modulo $b$.
Set $C=B[X,Y]$.
Note that  $J'=JA$ where 
$$J=(bX,bY,f-1)C=(bX,bY,aX^2+cY^2-1)C =(b,aX^2+cY^2-1)C$$
because $b=a(bX)^2+c(bY)^2-b(aX^2+cY^2-1)$.
So $A$ is  smooth over $B$ iff $J=C$, cf. \cite[Proposition 5.1.9]{MR}.
Now $J=C$ iff $aX^2+cY^2-1$ is invertible modulo $b$ iff $a,c$ are nilpotent modulo $b$.
\end{proof}

\begin{cor}{\em (\cite[Theorem 1]{R}.)}
Let $A=\mathbb{Z}[X,Y]/(aX^2+bXY+cY^2-1)$. Then $A$ is smooth over $\mathbb{Z}$ iff $b$ is odd or $a,b,c$ are all even.
\end{cor}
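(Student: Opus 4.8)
The plan is to specialize Theorem \ref{1} to the case $B=\mathbb{Z}$ and then compute the radical ideal $\sqrt{(2,b)\mathbb{Z}}$ explicitly. By Theorem \ref{1} the ring $A$ is smooth over $\mathbb{Z}$ if and only if $a,c\in\sqrt{(2,b)\mathbb{Z}}$, so the whole corollary reduces to unwinding this membership condition inside the ring of integers, and the only real work is understanding what the radical looks like.

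Since $\mathbb{Z}$ is a principal ideal domain, I would first rewrite $(2,b)\mathbb{Z}=\gcd(2,b)\,\mathbb{Z}$, which is $\mathbb{Z}$ when $b$ is odd and $2\mathbb{Z}$ when $b$ is even. The key observation is that because $2$ is prime, both $\mathbb{Z}$ and $2\mathbb{Z}$ are already radical ideals, so $\sqrt{(2,b)\mathbb{Z}}$ equals $\mathbb{Z}$ if $b$ is odd and equals $2\mathbb{Z}$ if $b$ is even; no subtlety involving higher powers of $2$ can intervene precisely because $2$ is prime in $\mathbb{Z}$.

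It then remains to read off the condition $a,c\in\sqrt{(2,b)\mathbb{Z}}$ in each case. If $b$ is odd, the radical is all of $\mathbb{Z}$ and the condition holds automatically, so $A$ is smooth. If $b$ is even, the condition becomes $a,c\in 2\mathbb{Z}$, i.e. $a$ and $c$ are both even; together with $b$ even this says exactly that $a,b,c$ are all even. Combining the two cases yields that $A$ is smooth over $\mathbb{Z}$ iff $b$ is odd or $a,b,c$ are all even. There is essentially no hard step: the statement is a direct numerical specialization of Theorem \ref{1}, and the only point requiring a moment's care—the radical computation—is immediate from the primality of $2$.
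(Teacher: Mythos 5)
Your proposal is correct and follows exactly the paper's route: the paper's proof is the one-line specialization of Theorem \ref{1} to $B=\mathbb{Z}$, with the radical computation $\sqrt{(2,b)\mathbb{Z}}=\mathbb{Z}$ or $2\mathbb{Z}$ left implicit. You merely make explicit the PID/primality argument that the paper takes for granted.
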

\begin{proof}
By Theorem \ref{1}, $A$ is smooth over $\mathbb{Z}$ iff $a,c\in  \sqrt{(2, b)\mathbb{Z}}$ iff $b$ is odd or $a,b,c$ are all even.
\end{proof}

Similar results can be stated for any  ring of algebraic integers; here are two examples.

\begin{cor}\label{11}
Let $A=\mathbb{Z}[(1+\sqrt{-7})/2][X,Y]/(aX^2+bXY+cY^2-1)$
and set $\theta=(1+\sqrt{-7})/2$.
Then $A$ is smooth over $\mathbb{Z}[(1+\sqrt{-7})/2]$ iff 
one of the following cases occurs:

$(i)$ $b$ is not divisible by  $\theta$ or $\bar{\theta}$,

$(ii)$ $b$ is not divisible by  $\theta$ and  $a,b,c$ are all divisible by $\bar{\theta}$,

$(iii)$ $b$ is not divisible by  $\bar{\theta}$ and  $a,b,c$ are all divisible by ${\theta}$,

$(iv)$   $a,b,c$ are all divisible by $2$.
\end{cor}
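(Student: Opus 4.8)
The plan is to apply Theorem \ref{1}, which asserts that $A$ is smooth over $B:=\mathbb{Z}[\theta]$ precisely when $a,c\in\sqrt{(2,b)B}$. Everything then reduces to understanding the ideal $(2,b)B$ and its radical, and this is purely a matter of the arithmetic of the quadratic order $B$. First I would record the relevant facts: $B$ is the ring of integers of the imaginary quadratic field $\mathbb{Q}(\sqrt{-7})$ (since $-7\equiv 1\pmod 4$), it is a principal ideal domain, and because $-7\equiv 1\pmod 8$ the rational prime $2$ splits, namely $2=\theta\bar\theta$ with $\theta=(1+\sqrt{-7})/2$ and $\bar\theta=(1-\sqrt{-7})/2$ non-associate primes of $B$. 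The factorization is checked directly from $\theta\bar\theta=(1+7)/4=2$.

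Since $B$ is a PID, $(2,b)B=(d)B$ with $d=\gcd(2,b)$, and the only divisors of $2=\theta\bar\theta$ up to units are $1,\theta,\bar\theta,2$. Thus $d$ is determined by whether $\theta$ and $\bar\theta$ divide $b$: it equals $1$ if neither divides $b$; $\theta$ if only $\theta\mid b$; $\bar\theta$ if only $\bar\theta\mid b$; and $2$ if both divide $b$ (equivalently $2\mid b$, as $\theta,\bar\theta$ are coprime). Taking radicals and using that $(\theta)$ and $(\bar\theta)$ are prime while $(2)=(\theta\bar\theta)$ is a product of distinct primes, hence already radical, gives
$$\sqrt{(2,b)B}=B,\ (\theta),\ (\bar\theta),\ \text{ or }\ (2)$$
in these four respective cases.

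Finally I would translate the membership $a,c\in\sqrt{(2,b)B}$ into the stated conditions. When $\sqrt{(2,b)B}=B$ the condition is vacuous, giving case $(i)$. When it equals $(\bar\theta)$ (only $\bar\theta\mid b$), smoothness means $\bar\theta\mid a$ and $\bar\theta\mid c$; together with $\bar\theta\mid b$ this is case $(ii)$, and the symmetric computation with $\theta$ gives case $(iii)$. When it equals $(2)$ (both primes divide $b$, i.e.\ $2\mid b$), smoothness means $2\mid a$ and $2\mid c$, which combined with $2\mid b$ is exactly case $(iv)$.

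I expect no serious obstacle: the argument is an exact specialization of Theorem \ref{1}. The only points demanding care are the splitting $2=\theta\bar\theta$ and, in the last case, the observation that $\sqrt{(2)}=(2)$ because $2$ is squarefree in $B$ — so that divisibility of $a,c$ by $2$ itself (not merely by $\theta$ or $\bar\theta$ separately) is required — together with checking that the four cases are mutually exclusive and exhaust all smooth configurations.
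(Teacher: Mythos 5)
Your proof is correct and follows essentially the same route as the paper: both apply Theorem \ref{1} and reduce everything to computing $\sqrt{(2,b)B}$ via the splitting $2=\theta\bar\theta$ into distinct primes. The only cosmetic difference is that you invoke the PID property and write $(2,b)B=(\gcd(2,b))$, whereas the paper avoids class-number considerations altogether by writing $\sqrt{(2,b)B}=(\theta,b)B\cap(\bar\theta,b)B$ and using only that $\theta B$ and $\bar\theta B$ are maximal ideals.
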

\begin{proof}
We have $2={\theta}\bar{\theta}$. 
By Theorem \ref{1}, $A$ is smooth over $B=\mathbb{Z}[(1+\sqrt{-7})/2]$ iff $a,c\in  \sqrt{(2, b)B}=(\theta,b)B\cap (\bar{\theta},b)B$. 
%Let $\alpha$ be $\theta$ or $\bar{\theta}$. Since $\alpha B$ is a maximal ideal, $(\alpha,b)B=\alpha B$ iff $\alpha$ divides $b$. 
As $\theta B$ and $\bar{\theta} B$ are maximal ideals,
the assertion is clear.
\end{proof}

\begin{cor}
%$(i)$ 
Let $A=\mathbb{Z}[\theta][X,Y]/(aX^2+bXY+cY^2-1)$,  $\theta=(\sqrt{2}+\sqrt{6})/2$.
Then $A$ is smooth over $\mathbb{Z}[\theta]$ iff $b$ is not divisible by $1+\theta$ or $a,b,c$ are all divisible by $1+\theta$.
%
%$(ii)$ Let $A=B[X,Y]/(aX^2+bXY+cY^2-1)$ where $B=\mathbb{Z}[t]/(t^5-5t^4+t^3-t^2+t+1)$ and set $P=(2,t+1)B$, $Q=(2,t^2+t+1)B$.  Then $A$ is smooth over $B$ iff $b\notin (P\cup Q)$ or ($b\notin P$ and $a,b,c\in Q$) or ($b\notin Q$ and $a,b,c\in P$) or $(a,b,c\in P\cap Q)$.
\end{cor}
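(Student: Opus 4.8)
The plan is to apply Theorem \ref{1} and then analyze the arithmetic of $B=\mathbb{Z}[\theta]$ at the prime $2$, exactly in the spirit of Corollary \ref{11}; the difference is that here $2$ turns out to be totally ramified rather than split. By Theorem \ref{1}, $A$ is smooth over $B$ iff $a,c\in\sqrt{(2,b)B}$, so everything reduces to computing the radical ideal $\sqrt{(2,b)B}$.

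First I would record the relevant algebra of $\theta=(\sqrt2+\sqrt6)/2$. A direct computation gives $\theta^2=2+\sqrt3$, whence $\theta$ is a root of $f(t)=t^4-4t^2+1$; in particular $\theta$ is an algebraic integer, $\sqrt3=\theta^2-2\in B$, and $B=\mathbb{Z}[\theta]\cong\mathbb{Z}[t]/(f)$. Evaluating $f$ at $t=-1$ yields $f(-1)=-2$, so $B/(1+\theta)B\cong\mathbb{Z}[t]/(f,t+1)\cong\mathbb{Z}/2\mathbb{Z}$. Hence $\mathfrak m:=(1+\theta)B$ is a maximal ideal with residue field $\mathbb{F}_2$.

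The computational heart is to show $\sqrt{2B}=\mathfrak m$. On one hand $2\in\mathfrak m$ because the residue field has characteristic $2$, so $\sqrt{2B}\subseteq\mathfrak m$. On the other hand, using $\theta^4=4\theta^2-1$ one expands
$$(1+\theta)^4=1+4\theta+6\theta^2+4\theta^3+\theta^4=2(2\theta^3+5\theta^2+2\theta)\in 2B,$$
so $1+\theta\in\sqrt{2B}$ and therefore $\mathfrak m\subseteq\sqrt{2B}$. Thus $\sqrt{2B}=\mathfrak m$, and moreover $\mathfrak m^4\subseteq 2B$. I would then compute $\sqrt{(2,b)B}$ and conclude. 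If $1+\theta\nmid b$, then $b\notin\mathfrak m$, so $bB+\mathfrak m=B$ and hence $bB+\mathfrak m^4=B$ (expanding $(bB+\mathfrak m)^4$); since $\mathfrak m^4\subseteq 2B$, this forces $(2,b)B=B$, and $A$ is smooth. If $1+\theta\mid b$, then $(2,b)B\subseteq\mathfrak m$, while $2B\subseteq(2,b)B$ gives $\mathfrak m=\sqrt{2B}\subseteq\sqrt{(2,b)B}\subseteq\mathfrak m$, so $\sqrt{(2,b)B}=\mathfrak m$; then $a,c\in\sqrt{(2,b)B}$ iff $1+\theta\mid a$ and $1+\theta\mid c$, i.e.\ (together with $1+\theta\mid b$) iff $a,b,c$ are all divisible by $1+\theta$. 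Combining the two cases gives the stated criterion.

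The main obstacle is the arithmetic of the second and third paragraphs: identifying $1+\theta$ as the prime above $2$ and verifying $\sqrt{2B}=(1+\theta)B$ through the explicit identity $(1+\theta)^4=2(2\theta^3+5\theta^2+2\theta)$. Once the behaviour of $2$ is pinned down, the ideal-theoretic bookkeeping is routine and parallels Corollary \ref{11}. One could alternatively confirm the ramification picture by the norm computations $N(\theta)=1$ and $N(1+\theta)=-2$, which show that $\theta$ is a unit and that $1+\theta$ lies over $2$ with residue degree one.
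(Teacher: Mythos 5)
Your proof is correct, and it is genuinely more self-contained than the paper's. The paper's own proof is essentially a one-liner: it cites a PARI-GP computation for the facts that $B=\mathbb{Z}[\theta]$ is a PID with $2B=(1+\theta)^4B$, deduces $\sqrt{(2,b)B}=(1+\theta,b)B$, and applies Theorem \ref{1}. You arrive at the same radical computation by hand: from $f(t)=t^4-4t^2+1$ you get $B\cong\mathbb{Z}[t]/(f)$, the evaluation $f(-1)=-2$ identifies $\mathfrak{m}=(1+\theta)B$ as a maximal ideal with residue field $\mathbb{F}_2$ (so $2\in\mathfrak{m}$), and the identity $(1+\theta)^4=2(2\theta^3+5\theta^2+2\theta)$ gives $\mathfrak{m}^4\subseteq 2B$. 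Notably, your argument needs strictly less than what the paper invokes: you use neither that $B$ is a PID nor the exact equality $2B=\mathfrak{m}^4$; the two inclusions $2B\subseteq\mathfrak{m}$ and $\mathfrak{m}^4\subseteq 2B$ already pin down $\sqrt{(2,b)B}$ in both cases, and Theorem \ref{1} holds over an arbitrary ring, so nothing about $B$ being the full ring of integers or having trivial class group is required. The one step you assert without verification is that $f$ is irreducible over $\mathbb{Q}$ (equivalently $[\mathbb{Q}(\theta):\mathbb{Q}]=4$), which is what underlies $B\cong\mathbb{Z}[t]/(f)$ and hence $B/(1+\theta)B\cong\mathbb{F}_2$ rather than $0$. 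This is routine to check (no rational roots since $f(\pm 1)=-2$, and no factorization into rational quadratics), or it can be bypassed: $B/(1+\theta)B$ is in any case a quotient of $\mathbb{Z}[t]/(f,t+1)\cong\mathbb{F}_2$, and it cannot be the zero ring, since $1\in(1+\theta)B\subseteq\sqrt{2B}$ would make $2$ a unit in a ring of algebraic integers. With that small point noted, your case analysis on whether $1+\theta$ divides $b$ is exactly the right bookkeeping and matches the stated criterion.
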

\begin{proof}
%$(i)$. 
It can be checked by PARI-GP (see \cite{P}) that $B=\mathbb{Z}[\theta]$ is a PID and $2B=(1+\theta)^4B$.  So $\sqrt{(2, b)B}=(1+\theta, b)B$.
Apply Theorem \ref{1}.
%
%$(ii)$ It can be checked by PARI-GP (see \cite{P}) that $B$ is a PID and $2B=PQ^2$. So $\sqrt{(2, b)B}=(P+bB)\cap (Q+bB)$. Apply Theorem \ref{1}.
\end{proof}

\section{Regularity}

Throughout this section we fix the following notations.
Let $B$ be a ring of algebraic integers, that is, the integral closure of $\mathbb{Z}$ in a finite field extension of $\mathbb{Q}$. It is well-known (e.g. \cite[Chapter 6]{AW}) that $B$ is a Dedekind domain, hence $II^{-1}=B$ for every nonzero ideal $I$ of $B$.
Fix  $a,b,c\in B$. We study the regularity  of the ring  $$A=B[X,Y]/(aX^2+bXY+cY^2-1).$$
Set $g=aX^2+bXY+cY^2-1$ and $C=B[X,Y]$.

\begin{lem}\label{6}
If $Q\in Spec(A)$ and $Q\cap B\not\supseteq (2,b)B$, then $A_Q$ is a regular ring.
\end{lem}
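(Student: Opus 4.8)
The plan is to show that under the stated hypothesis the localized ring $A_Q$ is smooth over $B$, and then to deduce its regularity from the fact that $B$ is regular. Write $g_X = 2aX + bY$ and $g_Y = bX + 2cY$ for the partial derivatives of $g$, and set $J' = (g_X, g_Y)A$ for the jacobian ideal. Since $A = C/(g)$ is a hypersurface over the polynomial ring $C = B[X,Y]$ and $g$ is a nonzerodivisor, the jacobian criterion, already invoked in the proof of Theorem \ref{1} (cf. \cite[Proposition 5.1.9]{MR}), tells us that $A_Q$ is smooth over $B$ precisely when $J' \not\subseteq Q$. So the goal reduces to checking that $Q$ does not contain $J'$.

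First I would verify that $(2,b)B \cdot A \subseteq J'$. The membership $2 \in J'$ follows from Euler's identity $Xg_X + Yg_Y = 2(aX^2+bXY+cY^2) = 2$ in $A$, exactly as in Theorem \ref{1}. For $b \in J'$ I would reduce modulo $2$: there $g_X \equiv bY$ and $g_Y \equiv bX$, and multiplying the relation $aX^2 + bXY + cY^2 = 1$ by $b$ writes $b$ as an $A$-combination of $bX^2, bXY, bY^2$, each of which lies in $(g_X,g_Y)A$ modulo $2$. Hence $b \in J' + 2A = J'$, since $2 \in J'$. Thus both generators of $(2,b)B$ lie in $J'$.

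Now the hypothesis $Q \cap B \not\supseteq (2,b)B$ provides an element $t \in (2,b)B$ with $t \notin Q \cap B$, hence $t \notin Q$. As $t \in (2,b)B \cdot A \subseteq J'$, this forces $J' \not\subseteq Q$, so $A_Q$ is smooth over $B$, and therefore over $B_{\mathfrak p}$, where $\mathfrak p = Q \cap B$. Because $B$ is a Dedekind domain, $B_{\mathfrak p}$ is a regular local ring (a field or a DVR), and a localization of a smooth algebra over a regular ring is regular. Consequently $A_Q$ is regular, as desired.

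The algebraic identity $b \in J'$ and the bookkeeping around the jacobian criterion require a little care, but the conceptual crux is the transfer of regularity across the smooth morphism $A_Q/B_{\mathfrak p}$, which is exactly where the regularity of $B$ (here, its being Dedekind) is used; everything else is inherited from Theorem \ref{1}.
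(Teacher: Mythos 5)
Your proof is correct and follows essentially the same route as the paper: both arguments establish that $A_Q$ is smooth over the regular local ring $B_P$ (where $P = Q\cap B$) via the jacobian criterion and then conclude regularity by transferring it along the smooth map. The only difference is bookkeeping: you verify the containment $(2,b)A \subseteq J'$ globally (inlining the Euler-formula and mod-$2$ computations from the proof of Theorem \ref{1}), whereas the paper simply applies Theorem \ref{1} over the localized base $B_P$, where $P \not\supseteq (2,b)B$ gives $\sqrt{(2,b)B_P} = B_P$ and smoothness is automatic.
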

\begin{proof}
Since $P:=Q\cap B\not\supseteq (2,b)B$,  Theorem \ref{1} shows that the composed morphism
$$B_{P}\rightarrow  B_{P}\otimes_B A =B_P[X,Y]/(g)\rightarrow A_{Q}$$ is smooth. Hence $A_{Q}$ is regular, because $B$ is regular.
\end{proof}

Let $\Gamma$ be   the (finite) set of prime ideals $P$ of $B$ such that $P\supseteq (2,b)B$ and $P\not\supseteq (a,c)B$. By Theorem \ref{1}, $A$ is smooth over $B$ iff $\Gamma=\emptyset.$

\begin{lem}\label{5}
If $Q\in Spec(A)$, then $Q\cap B\in \Gamma$ iff $Q\cap B\supseteq (2,b)B$.
\end{lem}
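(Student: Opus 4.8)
The plan is to prove the two implications separately; one is immediate from the definition of $\Gamma$, and the other rests on a single observation about the defining equation $g$. The forward implication is trivial: if $Q\cap B\in\Gamma$, then by the very definition of $\Gamma$ we have $Q\cap B\supseteq(2,b)B$.

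For the converse I would set $P:=Q\cap B$ and assume $P\supseteq(2,b)B$; the task is to show $P\not\supseteq(a,c)B$, which together with the hypothesis yields $P\in\Gamma$. I argue by contradiction, supposing $P\supseteq(a,c)B$. Then $b\in P$ (from $P\supseteq(2,b)B$) and $a,c\in P$ (from $P\supseteq(a,c)B$), so $a,b,c\in P$. Lifting $Q$ to the prime $Q'$ of $C=B[X,Y]$ with $Q'\supseteq(g)$ and $Q'\cap B=P$, I note that $a,b,c\in P\subseteq Q'$ forces $g=aX^2+bXY+cY^2-1\equiv-1\pmod{Q'}$, whence $-1\in Q'$. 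This is impossible for a proper prime ideal, giving the contradiction, so $P\not\supseteq(a,c)B$ and hence $P\in\Gamma$.

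The same step can be read off the fiber of $A$ over $P$: writing $\kappa(P)=B_P/PB_P$ for the residue field, the fiber is $\kappa(P)[X,Y]/(\bar aX^2+\bar bXY+\bar cY^2-1)$, which collapses to the zero ring when $a,b,c\in P$, since the relation becomes $-1=0$. The existence of $Q$ lying over $P$ forbids this, which is exactly the contradiction. There is no genuine obstacle here: the entire content of the lemma is that a prime of $A$ cannot contract to a prime of $B$ containing all of $a,b,c$, because the relation defining $A$ then pushes $1$ into that prime.
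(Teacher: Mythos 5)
Your proposal is correct and follows essentially the same route as the paper: the paper's proof notes that $(a,b,c)A=A$ (which holds precisely because the relation $aX^2+bXY+cY^2=1$ in $A$ puts $1$ in $(a,b,c)A$), so no prime of $A$ can contract to a prime of $B$ containing $a$, $b$, and $c$; your contradiction via $g\equiv -1 \pmod{Q'}$ in $C=B[X,Y]$ is exactly this observation made explicit. The forward implication is, as you say, immediate from the definition of $\Gamma$.
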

\begin{proof}
Assume that $P:=Q\cap B\supseteq (2,b)B$. As $(a,b,c)A=A$, it follows that $Q\not\supseteq (a,c)B$, so $P\not\supseteq (a,c)B$, that is, $P\in \Gamma$. The converse is obvious.
\end{proof}

Let $P\in \Gamma$.
Since $B$ is a ring of algebraic integers and $2\in P$, it follows that $B/P$ is a finite (thus perfect)  field of  characteristic $2$. 
If $z\in B$, let $\bar{z}$ denote its image in $B/P$. Let
$d,e\in B$ such that 
\begin{equation}\label{eq5}
\bar{d}^2=\bar{a} \mbox{ and } \bar{e}^2=\bar{c}. 
\end{equation}
Note that $d,e\in B$ are uniquely determined modulo $P$.
If $a\not\in P$ (hence $d\not\in P$), denote 
\begin{equation}\label{eq1}
F_P:=d^2g(\frac{-eY-1}{d},Y)=a(eY+1)^2+bd(-eY-1)Y+d^2cY^2-d^2=
\end{equation}
$$
=(ae^2-bde+cd^2)Y^2+(2ae-bd)Y+(a-d^2).$$

If $a\in P$ (hence $c,e\not\in P$, because $P\in \Gamma$), denote %by $F_P$ the polynomial 

\begin{equation}\label{eq2}
F_P:=e^2g(X,\frac{-1}{e})= ae^2X^2-beX+(c-e^2).
\end{equation}

%save
%\begin{equation}\label{eq2}
%F_P:=e^2g(X,\frac{-dX-1}{e})=c(dX+1)^2+be(-dX-1)X+e^2aX^2-e^2=
%\end{equation}
%$$=(cd^2-bde+ae^2)X^2+(2cd-be)X+(c-e^2).$$

\begin{lem}\label{7}
Let $M\in Spec(A)$ and $P=M\cap B$. Then the ring $A_M$ is not regular  iff $P\supseteq (2,b)B$ and $F_PP^{-1}\subseteq M$.
\end{lem}
\begin{proof}
By Lemma \ref{6},  $A_M$ is regular if $P\not\supseteq (2,b)B$. Assume that $P\supseteq (2,b)B$. By Lemma \ref{5}, $P\in \Gamma$, so $F_P$ is defined as in (\ref{eq1}) or (\ref{eq2}).
Set $K=B/P$. We use the notations after Lemma \ref{5}.
The image of $g$ in $K[X,Y]$ is 
\begin{equation}\label{eq12}
\bar{a}X^2+\bar{c}Y^2+\bar{1}=(\bar{d}X +\bar{e}Y +\bar{1})^2
\end{equation}
%
%We have 
%$$A/PA\simeq K[X,Y]/(\bar{a}X^2+\bar{c}Y^2+\bar{1})\simeq K[X,Y]/(\bar{d}X +\bar{e}Y +\bar{1})^2$$
because $\bar{b}=0$, $\bar{d}^2=\bar{a}$ and $\bar{e}^2=\bar{c}$. 
Let $Q$ be the inverse image of $M$ in $C=B[X,Y]$ and set 
\begin{equation}\label{eq3}
Z_P:={d}X +{e}Y +1.
\end{equation}
As   $P\subseteq Q$ and $g\in Q$, we get 
 $g-Z_P^2\in Q$,  so $Z_P\in Q$.  %Without loss of generality, we may 
Assume that $a\not\in P$ (the case $a\in P$, $c\not\in P$ is similar: we use (\ref{eq2}) instead of (\ref{eq1})). Then $d\not\in P$.     We have  $dX= Z_P-eY-1$, so 
$$d^2g=a(dX)^2+bd(dX)Y+d^2(cY^2-1)=$$
$$=a(Z_P-eY-1)^2+bd(Z_P-eY-1)Y+d^2(cY^2-1)=$$
%$$=aZ_P^2-2aZ_P(eY+1)+a(-eY-1)^2+bdZ_P+bd(-eY-1)Y+d^2(cY^2-1)=$$
$$=aZ_P^2-2aZ_P(eY+1)+bdZ_P+d^2g(\frac{-eY-1}{d},Y)=$$
\begin{equation}\label{eq11}
=aZ_P^2-2aZ_P(eY+1)+bdZ_P+F_P
\end{equation}
cf. (\ref{eq1}).
By \cite[Theorem 26, page 303]{ZS}, $A_{M}$ is not regular iff $g\in Q^2C_Q$ iff $d^2g\in Q^2C_Q$, because $d\notin Q$. 
Since $aZ_P^2$, $2aZ_P(eY+1)$ and $bdZ_P$ belong to $Q^2$, (\ref{eq11}) shows 
 that  $d^2g\in Q^2C_Q$ iff $F_P\in Q^2C_Q$.
Thus 
\begin{equation}\label{eq4}
A_{M} \mbox{ is not regular iff } F_P\in Q^2C_Q.
\end{equation}
By (\ref{eq11}), $F_P=(d^2g-aZ_P^2)+2aZ_P(eY+1)-bdZ_P$ where $d^2g-aZ_P^2\in PC$ (cf. \ref{eq12} and \ref{eq3}) and $2,b\in P$; so 
the coefficients of $F_P$ are in $P$.
%
%Note that the coefficients of $F_P$ are in $P$, because $b\in P$, $2\in P$, $\bar{d}^2=\bar{a}$ and $\bar{e}^2=\bar{c}$. Indeed, $\overline{ae^2-bde+cd^2}=\bar{a}\bar{e}^2+\bar{c}\bar{d}^2=2\bar{a}\bar{c}=0$, $\overline{2ae-bd}=0$ and $\overline{a-d^2}=\bar{a}-\bar{d}^2=0.$
%
Also, $P\not\subseteq Q^2C_Q$ because $C/PC=K[X,Y]$ is a regular ring. 
So, by (\ref{eq4}), $A_{M}$ is not regular iff $F_PC=P(F_PP^{-1})C\subseteq Q^2C_Q$ iff $F_PP^{-1}\subseteq QC_Q$ iff $F_PP^{-1}\subseteq Q$ iff $F_PP^{-1}\subseteq M$.
\end{proof}

\begin{thrm}\label{3}
The singular locus of $A$ is $V(H)$ where
$$H=\prod_{P\in \Gamma} (P,F_PP^{-1})A.$$
\end{thrm}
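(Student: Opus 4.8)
The plan is to convert the local non-regularity criterion of Lemma~\ref{7} into a single closed subset of $\mathrm{Spec}(A)$. Write $\mathrm{Sing}(A)$ for the set of $M\in\mathrm{Spec}(A)$ with $A_M$ not regular, and for such $M$ put $P=M\cap B$. Lemma~\ref{5} says that $P\supseteq(2,b)B$ holds exactly when $P\in\Gamma$, so Lemma~\ref{7} rewrites as
$$\mathrm{Sing}(A)=\bigcup_{P\in\Gamma}S_P,\qquad S_P:=\{M\in\mathrm{Spec}(A):M\cap B=P\ \text{and}\ F_PP^{-1}\subseteq M\}.$$
Here the union ranges over the finite set $\Gamma$, because only finitely many primes of the Dedekind domain $B$ contain the nonzero ideal $(2,b)B$. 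The whole proof then reduces to two things: identifying each $S_P$ with a closed set, and combining the finitely many closed sets into one.

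For a fixed $P\in\Gamma$ the key claim is $S_P=V\bigl((P,F_PP^{-1})A\bigr)$. First I would check that $(P,F_PP^{-1})A$ is a genuine ideal of $A$: the proof of Lemma~\ref{7} records that the coefficients of $F_P$ lie in $P$, so for $x\in P^{-1}$ the coefficients of $xF_P$ lie in $P^{-1}P=B$, whence $F_PP^{-1}\subseteq A$. Then, for $M\in\mathrm{Spec}(A)$, containment $M\supseteq(P,F_PP^{-1})A$ means simultaneously $M\supseteq PA$ and $F_PP^{-1}\subseteq M$, and $M\supseteq PA$ is equivalent to $M\cap B\supseteq P$. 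The decisive observation is that $P$ is a nonzero prime of a Dedekind domain, hence maximal, so $M\cap B\supseteq P$ forces $M\cap B=P$. This matches precisely the two defining conditions of $S_P$, giving the claimed equality.

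Finally I would assemble the pieces: $\mathrm{Sing}(A)$ is the finite union $\bigcup_{P\in\Gamma}V\bigl((P,F_PP^{-1})A\bigr)$, and a finite union of closed sets is $V$ of the product of the ideals, since $V(I_1\cdots I_n)=V(I_1\cap\cdots\cap I_n)=V(I_1)\cup\cdots\cup V(I_n)$. Therefore $\mathrm{Sing}(A)=V\bigl(\prod_{P\in\Gamma}(P,F_PP^{-1})A\bigr)=V(H)$.

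The step needing the most care is the identification $S_P=V((P,F_PP^{-1})A)$: one must use the maximality of $P$ to exclude primes $M$ with $M\cap B\supsetneq P$, and one must confirm that $F_PP^{-1}$ is an actual set of elements of $A$ rather than a formal fractional expression. Once these are in place, the remaining manipulations are the routine calculus of closed subsets of a spectrum.
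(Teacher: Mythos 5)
Your proof is correct and takes exactly the route the paper intends: the paper's own proof consists of the single line ``Apply Lemmas \ref{5} and \ref{7},'' and your argument is precisely the careful expansion of that line (the decomposition of the singular locus over $\Gamma$, the identification $S_P=V((P,F_PP^{-1})A)$ via maximality of $P$ in the Dedekind domain $B$, and the product formula for a finite union of closed sets). Nothing in your write-up deviates from or adds to the paper's approach beyond supplying the routine details it leaves implicit.
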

\begin{proof}
Apply Lemmas \ref{5} and \ref{7}.
\end{proof}

\begin{cor}\label{8}
The ring $A$ is regular iff   for every ${P\in \Gamma}$
we have:

$(i)$ if $a\notin P$, then $b-2de\in P^2$, $cd^2-ae^2\in P^2$, $a-d^2\notin P^2$,

$(ii)$ if $a\in P$, then $a\in P^2$, $b\in P^2$, $c-e^2\notin P^2$.
The elements $d,e$ (which depend on $P$) are defined in (\ref{eq5}).
\end{cor}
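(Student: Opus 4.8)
The plan is to start from Theorem \ref{3}, which says that the singular locus of $A$ is $V(H)$ with $H=\prod_{P\in\Gamma}(P,F_PP^{-1})A$. Thus $A$ is regular iff $V(H)=\emptyset$, iff $H=A$. Since $\Gamma$ is finite and $H$ is a finite product of ideals, and a product of ideals equals the unit ideal iff every factor does (each factor contains the product), we have $H=A$ iff $(P,F_PP^{-1})A=A$ for every $P\in\Gamma$. So the entire statement reduces to deciding, for a fixed $P\in\Gamma$, when $(P,F_PP^{-1})A=A$, and then matching the resulting condition with (i) and (ii).

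To analyze one factor I would reduce modulo $P$. Since $P\supseteq(2,b)B$, the quotient $K:=B/P$ is a field, and by (\ref{eq12}) and (\ref{eq3}) we have $A/PA=K[X,Y]/(\bar g)=K[X,Y]/(Z_P^2)$ with $Z_P=\bar dX+\bar eY+1$. Because $(P,F_PP^{-1})A\supseteq PA$, the quotient $A/(P,F_PP^{-1})A$ is an algebra over the field $K$, hence supported only over $P$ as a $B$-module; therefore its vanishing, i.e. the equality $(P,F_PP^{-1})A=A$, may be tested after localizing at $P$. In the DVR $B_P$ the ideal $PB_P=\pi B_P$ is principal, so $F_PP^{-1}$ localizes to the principal ideal generated by $\pi^{-1}F_P$, whose coefficients $\pi^{-1}\gamma_i$ lie in $B_P$ because the coefficients $\gamma_i$ of $F_P$ lie in $P$ (this was shown in the proof of Lemma \ref{7}). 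Reducing mod $\pi$, the class $\overline{\pi^{-1}\gamma_i}\in K$ is nonzero exactly when $\gamma_i\notin P^2$. Hence $(P,F_PP^{-1})A=A$ iff the polynomial $\overline{\pi^{-1}F_P}$ is a unit in $K[X,Y]/(Z_P^2)$.

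Finally I would describe the units of $K[X,Y]/(Z_P^2)$. In case $a\notin P$ we have $\bar d\neq 0$, so $\{Z_P,Y\}$ is a coordinate system and $K[X,Y]/(Z_P^2)\cong K[Y][Z_P]/(Z_P^2)$; an element $p+qZ_P$ with $p,q\in K[Y]$ is a unit iff $p\in K^{*}$. Since $F_P$ is a quadratic polynomial in $Y$ alone (by (\ref{eq1})), the reduction $\overline{\pi^{-1}F_P}$ lies in $K[Y]$, so it is a unit iff its $Y^2$- and $Y$-coefficients vanish while its constant term does not, i.e. $ae^2-bde+cd^2\in P^2$, $2ae-bd\in P^2$ and $a-d^2\notin P^2$. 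Symmetrically, when $a\in P$ one has $\bar e\neq 0$ (as $c\notin P$), uses $\{X,Z_P\}$ as coordinates together with (\ref{eq2}), and obtains $ae^2\in P^2$, $be\in P^2$, $c-e^2\notin P^2$. The remaining step is to rewrite these coefficient conditions in the form stated in the corollary, and this is the point I expect to require the most care: it exploits $2\in P$ and the invertibility of $d$ (resp. $e$) modulo $P$. Indeed $d(b-2de)=bd-2d^2e\equiv bd-2ae\pmod{P^2}$ because $2e(a-d^2)\in P^2$ (as $2\in P$ and $a-d^2\in P$), and since $d$ is a unit modulo $P$ this gives $b-2de\in P^2\Leftrightarrow 2ae-bd\in P^2$; next $ae^2-bde+cd^2=(cd^2-ae^2)+e(2ae-bd)$ converts the $Y^2$-condition into $cd^2-ae^2\in P^2$; and in case $a\in P$ the unit factors $e^2$ and $e$ are stripped off the same way to yield $a\in P^2$ and $b\in P^2$. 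This produces exactly conditions (i) and (ii).
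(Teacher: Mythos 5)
Your proposal is correct and follows essentially the same route as the paper: reduce via Theorem \ref{3} to the condition $(P,F_PP^{-1})A=A$ for each $P\in\Gamma$, test this after localizing at $P$ and reducing modulo $P$ so that everything happens in $K[X,Y]/(Z_P^2)$, read off the coefficient conditions on $\pi^{-1}F_P$, and then perform the same congruence manipulations (using $2\in P$, $a-d^2\in P$, and the invertibility of $d$ resp.\ $e$ modulo $P$) to reach conditions (i) and (ii). The only cosmetic difference is that you characterize units in $K[Y][Z_P]/(Z_P^2)$ directly, while the paper first replaces $Z_P^2$ by $Z_P$ and works in $K[Y]/(F_P/p)$; the computation is identical.
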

\begin{proof}
$(i)$. By Theorem \ref{3} (or Lemma \ref{7}), $A$ is regular iff  $(P,F_PP^{-1})A=A$ for every ${P\in \Gamma}$.
Fix ${P\in \Gamma}$, set $K=B/P$ and assume that  $a\notin P$.
Note that $F_P$ is defined in (\ref{eq1}); denote $F_P$ briefly by $\alpha Y^2+\beta Y+\gamma.$
%
%Then $$F_P=(ae^2-bde+cd^2)Y^2+(2ae-bd)Y+(a-d^2)=\alpha Y^2+\beta Y+\gamma.$$
We have the following chain of equivalences:
$$(P,F_PP^{-1})A=A  \Leftrightarrow C=(P,g,F_PP^{-1})C=(P,Z_P^2,F_PP^{-1})C \mbox{ (cf. \ref{eq3})} \Leftrightarrow $$ $$ \Leftrightarrow C=(P,Z_P,F_PP^{-1})C.$$ Hence $(P,F_PP^{-1})A=A $ iff the following ring is the zero ring 
$$C/(P,Z_P,F_PP^{-1})C\simeq B_P[X,Y]/(P,Z_P,\frac{F_P}{p})B_P[X,Y]\simeq$$ $$\simeq K[X,Y]/(Z_P,\frac{F_P}{p})K[X,Y]\simeq  K[Y]/(\frac{F_P}{p})K[Y]$$
where $p\in P$ is such that $pB_P=PB_P$; for the last isomorphism we used the fact that $d\notin P$.
Thus $(P,F_PP^{-1})A=A$ iff $\alpha/p\in pB_P$, $\beta/p\in pB_P$ and $\gamma/p\notin pB_P$ iff $\alpha \in P^2$, $\beta\in P^2$ and $\gamma\notin P^2$ iff  $ae^2+cd^2 \equiv bde$, $bd \equiv 2ae$ and $a\not\equiv d^2$ where the congruences are modulo $P^2$.
From $ae^2+cd^2 \equiv (bd)e$ and  $bd \equiv 2ae$, we get $ae^2+cd^2 \equiv bde\equiv (2ae)e$ and $bd \equiv 2d^2e$ (because $2a\equiv 2d^2$), so $cd^2 \equiv    ae^2$ and $b \equiv 2de$ because $d\notin P$.
The argument is reversible.
Case $(ii)$ can be done similarly.
\end{proof}

\begin{remark}\label{12}
Note that condition $a-d^2\notin P^2$ in Corollary  \ref{8} means that $a$ is not a quadratic residue modulo $P^2$.
\end{remark}

\begin{cor}{\em (\cite[Theorem 2]{R}.)}
Let $A=\mathbb{Z}[X,Y]/(aX^2+bXY+cY^2-1)$. Then $A$ is regular but not smooth over $\mathbb{Z}$ 
iff one of the following cases occurs (all congruences below are modulo $4$):

$(1)$ $a\equiv 3$, $b\equiv 2$, $c\equiv 3$,

$(2)$  $a\equiv 0$, $b\equiv 0$, $c\equiv 3$,

$(3)$  $a\equiv 3$, $b\equiv  0$, $c\equiv 0$.
%
%$(1)$ $a-3$, $b-2$, $c-3$ are divisible by $4$,

%$(2)$  $a$, $b$, $c-3$ are divisible by $4$,

%$(3)$  $a-3$, $b$, $c$ are divisible by $4$.
\end{cor}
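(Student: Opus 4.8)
The plan is to specialize Corollary~\ref{8} to the case $B=\mathbb{Z}$, where the set $\Gamma$ becomes completely explicit. First I would observe that $A$ fails to be smooth over $\mathbb{Z}$ precisely when $\Gamma\neq\emptyset$, and that a prime $P\supseteq(2,b)\mathbb{Z}$ forces $P=2\mathbb{Z}$. Thus $A$ is regular but not smooth exactly when $b$ is even and the single condition of Corollary~\ref{8} holds at $P=2\mathbb{Z}$. Since $P^2=4\mathbb{Z}$, every congruence in Corollary~\ref{8} becomes a congruence modulo~$4$, which is why the final answer is phrased mod~$4$. The membership $P\in\Gamma$ further requires $P\not\supseteq(a,c)\mathbb{Z}$, i.e.\ $a$ and $c$ are not both even; this is what separates the genuinely singular configurations from the smooth ones.

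Next I would split into the two cases of Corollary~\ref{8} according to whether $a\in P=2\mathbb{Z}$, that is, whether $a$ is even or odd. In case $(i)$, $a$ is odd, so $\bar a=\bar 1$ in $\mathbb{Z}/2$ and hence $d$ is odd with $d^2\equiv 1\pmod 8$; the condition $a-d^2\notin P^2$ then reads $a\not\equiv 1\pmod 4$, i.e.\ $a\equiv 3\pmod 4$ (using Remark~\ref{12}: $a$ must fail to be a quadratic residue mod~$4$, and the nonzero residues mod~$4$ that are odd are $1$ and $3$). The conditions $b-2de\in P^2$ and $cd^2-ae^2\in P^2$ then pin down $b$ and $c$ modulo~$4$. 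Here I would use that $e^2\equiv c$ and $d^2\equiv 1\pmod 2$ to reduce $cd^2-ae^2\equiv c-ae^2$ and evaluate $e$ according to the parity of $c$. The upshot should be the two subcases $a\equiv 3,\,c\equiv 3,\,b\equiv 2$ and $a\equiv 3,\,c\equiv 0,\,b\equiv 0\pmod 4$, the latter being case~$(3)$ of the statement.

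In case $(ii)$, $a$ is even, so $a\in P=2\mathbb{Z}$, and since $P\in\Gamma$ forces $(a,c)\not\subseteq P$ we must have $c$ odd, giving $e$ odd and $e^2\equiv 1\pmod 8$. The conditions $a\in P^2$ and $b\in P^2$ read $a\equiv 0\pmod 4$ and $b\equiv 0\pmod 4$, while $c-e^2\notin P^2$ gives (again via Remark~\ref{12}) $c\equiv 3\pmod 4$. This yields case~$(2)$ of the statement. Assembling the three subcases—case~$(i)$ producing $(1)$ and $(3)$, and case~$(ii)$ producing $(2)$—and checking that these are mutually exclusive and exhaust the non-smooth regular configurations completes the argument.

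I expect the main obstacle to be bookkeeping rather than conceptual: one must carefully track the auxiliary elements $d,e$ (defined only modulo $P=2\mathbb{Z}$) and reduce each congruence from Corollary~\ref{8} to an unambiguous statement modulo~$4$, being careful that the choices of $d$ and $e$ do not affect the final conditions. The one genuinely delicate point is verifying that the singular (regular-but-not-smooth) locus is nonempty for each listed case, i.e.\ that $\Gamma=\{2\mathbb{Z}\}$ really does occur and that the displayed congruences are simultaneously satisfiable; but since these are explicit numerical congruences, exhibiting representatives (such as $a=c=3,b=2$) settles it immediately.
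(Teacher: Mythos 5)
Your proposal is correct and takes essentially the same route as the paper: both specialize Corollary \ref{8} to $B=\mathbb{Z}$, note that non-smoothness means $\Gamma=\{2\mathbb{Z}\}$ with $P^2=4\mathbb{Z}$, and split on the parity of $a$ (and then $c$) to extract the three mod-$4$ cases. The only cosmetic difference is that the paper fixes the square roots as $d=a$, $e=c$ (valid since $x^2\equiv x \pmod 2$), whereas you keep $d,e$ generic and use that odd squares are congruent to $1$ modulo $4$; the resulting bookkeeping is identical.
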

\begin{proof}
Assume that $A$ is  not smooth over $\mathbb{Z}$. With the notations of Corollary \ref{8}, we have $\Gamma=\{2\mathbb{Z}\}$. 
We can take $d=a$ and $e=c$. 
By Corollary \ref{8}, $A$ is regular iff

$(i)$ if $a$ is odd, then $b-2ac\in 4\mathbb{Z}$, $ac(a-c)\in 4\mathbb{Z}$, $a-a^2\notin 4\mathbb{Z}$,

$(ii)$ if $a$ is even, then $a\in 4\mathbb{Z}$, $b\in 4\mathbb{Z}$, $c-c^2\notin 4\mathbb{Z}$.
\\
The conclusion follows.
\end{proof}

\begin{example}
Consider the ring $$D=\mathbb{Z}[\theta][X,Y]/((1-\theta)X^2+\theta XY+(1-\theta)Y^2-1)$$ where  $\theta=(1+\sqrt{-7})/2$.
Using the notations above, we have: $\Gamma=\{ P\}$, where $P=\theta \mathbb{Z}[\theta]$, $a=c=1-\theta$, $b=\theta$, $d=e=1$. 
We check the conditions in part $(i)$ of Corollary \ref{8}: $b-2de=\theta-2=\theta^2\in P^2$, $cd^2-ae^2=0\in P^2$, $a-d^2=-\theta\notin P^2$. So $D$ is a regular ring. By Corollary \ref{11}, $D$ is not smooth over $\mathbb{Z}[\theta]$.
\end{example}

\begin{cor}
Let $A=\mathbb{Z}[\sqrt{-5}][X,Y]/(aX^2+bXY+cY^2-1)$ and $P=(2,1+\sqrt{-5})\mathbb{Z}[\sqrt{-5}]$.

$(i)$ $A$ is smooth over $\mathbb{Z}[\sqrt{-5}]$ iff 
$b\not\in P$ or $a,b,c\in P$.

$(ii)$ $A$ is regular but not smooth over $\mathbb{Z}[\sqrt{-5}]$ iff one of the following cases occurs:

\ \ \ \ $(1)$ $a-\sqrt{-5}$, $b$, $c-\sqrt{-5}$ are divisible by $2$,

\ \ \ \ $(2)$  $a$, $b$, $c-\sqrt{-5}$ are divisible by $2$,

\ \ \ \ $(3)$  $a-\sqrt{-5}$, $b$, $c$  are divisible by $2$.
\end{cor}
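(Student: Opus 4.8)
The plan is to specialize the general machinery already developed, namely Theorem \ref{1} and Corollary \ref{8}, to the base $B=\mathbb{Z}[\sqrt{-5}]$. The first step I would carry out is to record the arithmetic of $B$ at the prime $2$. Since $2$ ramifies in $\mathbb{Z}[\sqrt{-5}]$, we have $2B=P^2$ with $P=(2,1+\sqrt{-5})B$ the unique prime of $B$ lying over $2$, and residue field $B/P\cong\mathbb{F}_2$. In particular, the phrase ``divisible by $2$'' is synonymous with ``lying in $P^2$'', the quotient map sends $\sqrt{-5}\mapsto 1$, and the four classes of $B/2B$ are represented by $0,1,\sqrt{-5},1+\sqrt{-5}$, with $P/P^2=\{0,1+\sqrt{-5}\}$. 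I would also tabulate squares modulo $2B$: both $1$ and $\sqrt{-5}$ square to $1$, while $0$ and $1+\sqrt{-5}$ square to $0$.

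For part $(i)$ I would simply invoke Theorem \ref{1}: $A$ is smooth over $B$ iff $a,c\in\sqrt{(2,b)B}$. Since $2\in P$ and $P$ is the only prime of $B$ over $2$, if $b\notin P$ then $(2,b)B=B$ and smoothness is automatic, whereas if $b\in P$ then $\sqrt{(2,b)B}=P$, so smoothness becomes $a,c\in P$. This yields exactly the stated dichotomy ``$b\notin P$, or $a,b,c\in P$''.

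For part $(ii)$ the starting observation is that $\Gamma\subseteq\{P\}$, because the only prime containing $(2,b)B$ is $P$; hence $A$ fails to be smooth precisely when $\Gamma=\{P\}$, that is, $b\in P$ and not both $a,c\in P$. I would then split into the three subcases $\{a,c\notin P\}$, $\{a\in P,\,c\notin P\}$, $\{a\notin P,\,c\in P\}$ and apply Corollary \ref{8}. Because $B/P=\mathbb{F}_2$, every residue is a square and the auxiliary elements $d,e$ of (\ref{eq5}) may be chosen from $\{0,1\}$: take $d=e=1$ in the first subcase, $e=1$ in the second (case $(ii)$ of Corollary \ref{8}, since $a\in P$), and $d=1,\,e=0$ in the third. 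Substituting these into the congruences of Corollary \ref{8} and reading each ``$\in P^2$'' as ``divisible by $2$'', the conditions collapse to $a\equiv c\equiv\sqrt{-5},\,b\equiv 0$ in the first subcase (case $(1)$), to $a\equiv 0,\,b\equiv 0,\,c\equiv\sqrt{-5}$ in the second (case $(2)$), and to $a\equiv\sqrt{-5},\,b\equiv 0,\,c\equiv 0$ in the third (case $(3)$), all modulo $2$.

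The step demanding the most care is the translation of condition $(iii)$ of Corollary \ref{8}, equivalently Remark \ref{12}: the requirement $a-d^2\notin P^2$ must be unwound using the square table of $B/2B$ above. For $a\notin P$ this non-residue condition forces $a\equiv\sqrt{-5}\pmod 2$ rather than $a\equiv 1$, and symmetrically for $c$; this is precisely why the residue $\sqrt{-5}$, rather than $1$, appears in the three cases. I would close by checking that the three resulting configurations are mutually exclusive and each is genuinely non-smooth: $b\equiv 0$ gives $b\in P$, while the asymmetry between $a$ and $c$ keeps $\Gamma$ nonempty, so the displayed cases are exactly those where $A$ is regular but not smooth over $\mathbb{Z}[\sqrt{-5}]$.
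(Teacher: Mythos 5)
Your proposal is correct and takes essentially the same route as the paper: part $(i)$ via Theorem \ref{1}, and part $(ii)$ by observing $\Gamma=\{P\}$, then applying Corollary \ref{8} together with the quadratic-residue interpretation of Remark \ref{12} in the three subcases determined by whether $a$ or $c$ lies in $P$. Your write-up is simply more explicit (the square table modulo $2B=P^2$, the explicit choices of $d,e$); the only slip is the reference to a nonexistent ``condition $(iii)$'' of Corollary \ref{8}, where you clearly mean the third condition $a-d^2\notin P^2$ of its case $(i)$.
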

\begin{proof}
By Theorem \ref{1}, $A$ is smooth over $B=\mathbb{Z}[\sqrt{-5}]$ iff $a,c\in  \sqrt{(2, b)B}=P+bB$. Hence $(i)$ follows.
Assume that $A$ is regular but not smooth over $B$. With the notations of Corollary \ref{8}, we have $\Gamma=\{P\}$. 
Since $2B=P^2$, conditions $(i)$, $(ii)$ in Corollary \ref{8} imply that $2$ divides $b$. Assume that $a,c\notin P$. By Remark \ref{12}, 
$a\equiv c\equiv \sqrt{-5}$ (mod $2$). Assume that $a\in P$. By conditions  $(ii)$ in Corollary \ref{8}, we get 
$a\in 2B$  and $c\equiv \sqrt{-5}$ (mod $2$). Similarly, if $c\in P$, we get $c\in 2B$  and $a\equiv \sqrt{-5}$ (mod $2$).
So $(1)$,$(2)$ and $(3)$ hold. The converse follows easily from Corollary \ref{8}.
\end{proof}

The arguments in the proof of Theorem \ref{3} can be also used in certain  higher degree cases. We close our note with an example in this direction.

\begin{example}\label{14}
We show that for every prime $p$, the following ring is regular but not smooth over $\mathbb{Z}$ $$D=\mathbb{Z}[X,Y]/((p+1)X^p+p^2Y^p-1).$$
Using Euler's formula as in the proof of Theorem \ref{1}, we can prove easily that the jacobian ideal of $D$ is $J=(p,X^p-1)D\neq D$, so 
$D$ is not smooth over $\mathbb{Z}$.
Set $g=(p+1)X^p+p^2Y^p-1$ and $Z=X-1$.
Assume there exists a prime ideal $Q$ of $E=\mathbb{Z}[X,Y]$ containing $g$ such that $D_{QD}$ is not regular. As $J=(p,X^p-1)D$, $p\in Q$. Since $g$ and $Z^p$ are congruent modulo $pE$, we get $Z^p\in (g,pE)\subseteq Q$, hence $Z\in Q$. Now $X=Z+1$, so
$$g=(p+1)(Z+1)^p+p^2Y^p-1 =(p+1)[Z^p+
\sum_{k=1}^{p-1} {{p}\choose{k}}Z^{p-k}]   +p^2Y^p+p.
$$
Note that the bracket above belongs to $Q^2$ because $p,Z\in Q$. Repeating the argument in the final part of the proof of Lemma \ref{7}, we get $pY^p+1=(p^2Y^p+p)/p\in Q$, a contradiction because $p\in Q$.
\end{example}

\end{document}